\newtheorem{thrm}{Theorem}[section]
\newtheorem{lem}[thrm]{Lemma}
\theoremstyle{definition}
\numberwithin{equation}{section}
\newcommand{\End}{\operatorname{End}}
\newcommand{\Hom}{\operatorname{Hom}}
\newcommand{\Ann}{\operatorname{Ann}}
\newcommand{\fm}{\frak{m}}
\newcommand{\fn}{\frak{n}}
\author{ Majid Eghbali}
\address{School of Mathematics, Institute for Research in Fundamental Sciences (IPM), P. O. Box: 19395-5746,
Tehran-Iran.}
\email{m.eghbali@yahoo.com}
\keywords{Annihilator of Local cohomology, Characteristic zero,
D-modules.}
\subjclass[2000]{13D45}
\begin{document}

\title[ Annihilator of local cohomology modules]{A note on the Annihilator of local cohomology modules in characteristic zero}

\thanks{The author is supported in part by a grant from IPM}.

\begin{abstract}
We give an alternative proof to the annihilator of local cohomology
in characteristic zero which was proved by Lyubeznik.
\end{abstract}
 \maketitle

\section{Introduction} \label{sect1}

Let $R$ be a commutative Noetherian ring and $I \subset R$ be an
ideal. The $i$th local cohomology module of $R$ with support in $I$
is denoted by $H^i_I(R)$. In the present note, our main result
(Theorem \ref{Lyubeznik}) provides a different point of view of
$\Ann_R H^i_I(R)=0$ credited to Lyubeznik \cite[Corollary
3.6]{Lyu2}, where $R$ is a regular local ring containing a field of
characteristic $0$. Our way to prove the results is to use the
so-called $D$-modules. This method has played a decisive role in
many subsequent studies in the rings of characteristic zero.

Let $R$ be a commutative algebra over a field $k$ of characteristic
zero. We denote by $\End_k (R)$ the $k$-linear endomorphism ring.
The ring of $k$-linear differential operators $D_{R|k} \subseteq
\End_k (R)$ generated by the $k$-linear derivations $R \rightarrow
R$ and the multiplications by elements of $R$. By a $D_{R|k}$-module
we always mean a left $D_{R|k}$-module. The injective ring
homomorphism $R \rightarrow D_{R|k}$ that sends $r$ to the map $R
\rightarrow R$ which is the multiplication by $r$, gives $D_{R|k}$ a
structure of $R$-algebra. Every $D_{R|k}$-module $M$ is
automatically an $R$-module via this map. The natural action of
$D_{R|k}$ on $R$ makes $R$ a $D_{R|k}$-module. If $R=k[[x_1,
\ldots,x_n]]$ is a formal power series ring of $n$ variables $x_1,
\ldots, x_n$ over $k$, then $D_{R|k}$ is left and right Noetherian.
Moreover, $D_{R|k}$ is a simple ring. Noteworthy, the local
cohomology module $H^{i}_{I}(R),\ i \in \mathbb{Z}$ is a finitely
generated $D_{R|k}$-module. For a more advanced exposition based on
differential operators and undefined concepts the interested reader
might consult \cite{Bj}. For brevity we often write $D_R$ for
$D_{R|k}$ when there is no ambiguity about the field $k$.

\section{results}

\begin{lem} \label{A} Let $R$ be  $k[|x_1, \ldots,x_n|]$, a formal power series ring of $n$
variables $x_1, \ldots, x_n$ over a field $k$ of characteristic
zero. Suppose that $H^{i}_{I}(R)\neq 0$, then $\Ann_{D_R}
H^{i}_{I}(R)=0$.
\end{lem}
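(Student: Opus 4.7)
The plan is to exploit the simplicity of $D_R$, which is one of the structural facts recalled in the introduction. Since $H^i_I(R)$ is a left $D_R$-module, I first want to check that its annihilator $\Ann_{D_R} H^i_I(R)$ is a \emph{two-sided} ideal of $D_R$, not merely a left ideal. This is routine: if $d$ annihilates every element of $H^i_I(R)$ and $d' \in D_R$, then for any $m \in H^i_I(R)$, $(dd')m = d(d'm) = 0$ since $d'm \in H^i_I(R)$, and $(d'd)m = d'(dm) = 0$. So $\Ann_{D_R} H^i_I(R)$ is stable under both left and right multiplication by $D_R$.

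Next I would invoke the fact, recalled in the introduction, that $D_R = D_{R|k}$ is a simple ring when $R = k[[x_1,\ldots,x_n]]$ and $k$ has characteristic zero. Simplicity means the only two-sided ideals are $0$ and $D_R$ itself, so
\[
\Ann_{D_R} H^i_I(R) \in \{0, D_R\}.
\]

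Finally, I would rule out the second possibility using the hypothesis $H^i_I(R) \neq 0$. If $\Ann_{D_R} H^i_I(R) = D_R$, then in particular $1 \in D_R$ annihilates every element of $H^i_I(R)$, forcing $H^i_I(R) = 0$, a contradiction. Hence $\Ann_{D_R} H^i_I(R) = 0$.

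There is essentially no obstacle here beyond being careful about the two-sidedness of the annihilator and quoting the correct form of the simplicity theorem for $D_{R|k}$ in characteristic zero; both ingredients are already stated in the introduction, so the argument should fit in a few lines.
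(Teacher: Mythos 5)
Your proof is correct and is essentially the paper's argument: both rest on the annihilator being a two-sided ideal of the simple ring $D_{R|k}$, with the hypothesis $H^i_I(R)\neq 0$ ruling out the unit ideal. The paper merely packages this as the injectivity of the natural map $D_R \to \Hom_k(H^i_I(R),H^i_I(R))$, whose kernel is the annihilator; your version is, if anything, slightly more explicit about the two-sidedness and about where the nonvanishing hypothesis is used.
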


\begin{proof} Consider the homomorphism
\begin{equation}\label{hom}
   D_R \stackrel{f}{\longrightarrow} \Hom_{k}(H^{i}_{I}(R),H^{i}_{I}(R))
\end{equation}
of $D_R$-modules defined by $f(P)(m) = Pm,\ P \in D_R$ and $m \in
H^{i}_{I}(R)$ for all $i \in \mathbb{Z}$. The homomorphism $f$ is
injective, as $D_R$ is a simple ring, i.e. $\Ann_{D_R}
H^{i}_{I}(R)=\ker f=0$.
\end{proof}

\begin{lem} \label{B} Let $R$ be as in Lemma \ref{A}. Let $M$ be both an $R$ module and a
$D_R$-module. Then $\Ann_{D_R}M =0$ implies $\Ann_R M =0$.
\end{lem}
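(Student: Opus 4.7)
The plan is to exploit the embedding of $R$ as a subring of $D_R$ and deduce the statement by a straightforward restriction-of-scalars argument. As noted in the introduction, the assignment $r \mapsto (m \mapsto rm)$ gives an injective ring homomorphism $R \hookrightarrow D_R$, so $R$ may be regarded as a subring of $D_R$. Under this identification, the $R$-module structure on any $D_R$-module is, by the very definition alluded to in the statement, the one obtained by restricting scalars along $R \subseteq D_R$.

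With this in hand, the proof reduces to a set-theoretic observation. For $r \in R$, one has $rm = 0$ for all $m \in M$ in the sense of the $R$-action if and only if the same holds for $r$ viewed as an element of $D_R$, that is, $\Ann_R M = \Ann_{D_R} M \cap R$. Since $\Ann_{D_R} M = 0$ by hypothesis, intersecting with $R$ yields $\Ann_R M = 0$, which is the desired conclusion.

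There is in fact no serious obstacle here: the content of the lemma is entirely captured by the fact that $R$ sits inside $D_R$ as a subring. The only point worth being explicit about is the compatibility of the two module structures on $M$, and in the intended application $M = H^{i}_{I}(R)$ the $R$-module structure is by construction the restriction of the natural $D_R$-action, so this compatibility is automatic. Combining this lemma with Lemma \ref{A} will then deliver $\Ann_R H^{i}_{I}(R) = 0$ for the regular local ring $R$ in the main theorem.
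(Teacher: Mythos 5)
Your proof is correct and is essentially the paper's own argument: both identify $R$ with the subring of multiplication operators inside $D_R$ and conclude that $\Ann_R M = \Ann_{D_R} M \cap R = 0$. Your explicit remark that the given $R$-module structure must be the restriction of the $D_R$-module structure (as it is for $M = H^{i}_{I}(R)$) makes precise a compatibility hypothesis that the paper's proof uses only implicitly.
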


\begin{proof} Let $r \in \Ann_R M$ be an arbitrary element. As the endomorphism $\varphi_r :R \rightarrow
R$ with $\varphi_r(s) =rs$ for all $s \in R$ is an element of $D_R$
so from $rsM =0$ (for all $s \in R$) we have $\varphi_r(s)M =0$.
That is $\varphi_r(s) \in \Ann_{D_R}M =0$, i.e. $rs = \varphi_r(s)
=0$, for all $s \in R$. Hence, we have $r = 0$, as desired.
\end{proof}

\begin{thrm} \label{Lyubeznik}
Let $(R,\fm)$ be a regular local ring containing a field of
characteristic zero. Suppose that $H^i_I(R)\neq 0$. Then $\Ann_R
H^i_I(R)=0$.
\end{thrm}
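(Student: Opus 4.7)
The plan is to reduce to the formal power series setting treated in Lemmas \ref{A} and \ref{B} by passing to the $\fm$-adic completion of $R$. Let $\widehat{R}$ denote this completion. Since $R$ is regular local and contains a field of characteristic zero, $\widehat{R}$ is again regular local, and its residue field $R/\fm = \widehat{R}/\widehat{\fm}$ has characteristic zero: any characteristic-zero subfield of $R$ meets $\fm$ trivially and therefore embeds into $R/\fm$. Cohen's structure theorem then identifies $\widehat{R}$ with a formal power series ring $k[|x_1,\ldots,x_n|]$ over a field $k$ of characteristic zero, putting $\widehat{R}$ squarely within the hypotheses of Lemma \ref{A}.

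Next, I would transport the annihilator problem from $R$ to $\widehat{R}$ using the fact that $R \to \widehat{R}$ is faithfully flat. Flat base change for local cohomology gives
\begin{equation*}
    H^i_I(R) \otimes_R \widehat{R} \;\cong\; H^i_{I\widehat{R}}(\widehat{R}),
\end{equation*}
and since $H^i_I(R) \neq 0$, faithful flatness ensures that $H^i_{I\widehat{R}}(\widehat{R}) \neq 0$. Applying Lemma \ref{A} to $\widehat{R}$ yields $\Ann_{D_{\widehat{R}}} H^i_{I\widehat{R}}(\widehat{R}) = 0$, and Lemma \ref{B} upgrades this to $\Ann_{\widehat{R}} H^i_{I\widehat{R}}(\widehat{R}) = 0$.

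Finally, I descend back to $R$. If $r \in \Ann_R H^i_I(R)$, then after tensoring with $\widehat{R}$ the image of $r$ annihilates $H^i_I(R) \otimes_R \widehat{R} \cong H^i_{I\widehat{R}}(\widehat{R})$, so this image lies in $\Ann_{\widehat{R}} H^i_{I\widehat{R}}(\widehat{R}) = 0$. Since $R \hookrightarrow \widehat{R}$ is injective (by faithful flatness, or equivalently by Krull's intersection theorem), it follows that $r = 0$, as desired.

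The argument contains no serious obstacle: the real work has already been done inside $\widehat{R}$ via the $D$-module machinery of Lemmas \ref{A} and \ref{B}, and the passage from $R$ to $\widehat{R}$ and back is standard faithfully flat descent. The one point that requires a brief check — and is arguably the only subtlety — is that the coefficient field supplied by Cohen's theorem can be chosen of characteristic zero, which is precisely the observation recorded in the first paragraph.
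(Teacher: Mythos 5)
Your proposal is correct and follows essentially the same route as the paper: reduce to the complete case $\widehat{R}\cong k[|x_1,\ldots,x_n|]$ via faithful flatness and Cohen's structure theorem, apply Lemmas \ref{A} and \ref{B}, and descend along the injection $R\hookrightarrow\widehat{R}$. The only difference is that you skip the paper's preliminary passage (via Bourbaki) to a ring with algebraically closed residue field; since Lemma \ref{A} is stated for an arbitrary field $k$ of characteristic zero, that step is not needed for your argument, and your explicit one-line descent of the annihilator is if anything cleaner than the paper's appeal to ``it is known that $\Ann_R H^i_I(R)=(\Ann_S H^i_{IS}(S))\cap R$.''
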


\begin{proof}

Suppose $k = R/\fm$, where $\fm$ is the maximal ideal of $R$ and
$char(k) =0$. By virtue of \cite[Chapter IX, Appendice 2.]{Bo} there
exists a faithfully flat homomorphism from $(R,\fm)$ to a regular
local ring $(S,\fn)$ such that $S/\fn$ is the algebraic closure of
$k$. As $S$ is faithfully flat over $R$ then the homomorphism is
injective so $S$ contains a field. Moreover, it is known that
$\Ann_R H^i_I (R)= (\Ann_S H^i_{IS} (S)) \cap R$ and $H^i_I
(R)\otimes_R S \cong H^i_{IS}(S) \neq 0$, because of faithfully
flatness of $S$. Then, we may assume that $k$ is algebraically
closed. As $\hat{R}$ is also faithfully flat $R$-module, we may
assume that $R$ is complete, so that $R=k[[x_1,\ldots,x_n]]$ by the
Cohen Structure Theorem, where $n = \dim R$. Since,
$k[[x_1,\ldots,x_n]]$ has a $D_R$-module structure so, we are done
by Lemma \ref{B} and Lemma \ref{A}.
\end{proof}

\proof[Acknowledgements]

The author is grateful to Gennady Lyubeznik for pointing out an
error in a previous version of the paper. Many thanks to Josep
\`{A}lvarez-Montaner, who taught me the theory of D-modules,
generously.

\end{document}